\tikzset{
	arrow/.pic={\path[tips,every arrow/.try,->,>=#1] (0,0) -- +(.1pt,0);},
	pics/arrow/.default={triangle 45}
}
\tikzset{snake it/.style={decorate, decoration={snake, amplitude = 2, segment length = 5,}}}
\theoremstyle{plain}
\newtheorem{theorem}{Theorem}
\newtheorem{example}{Example}
\theoremstyle{definition}
\begin{document}
	
\title{On the Mellin transform of $\frac{\log^{n}(1+x)}{(1+x)^{m+1}}$}

\author[Sumit Kumar Jha]{Sumit Kumar Jha}
\address{International Institute of Information Technology\\
Hyderabad, India}
\email{kumarjha.sumit@research.iiit.ac.in}

\keywords{Ramanujan's master theorem, Mellin transform, digamma function, gamma function, logarithm function, Hurwitz zeta function}
\subjclass[2010]{33B15}

\begin{abstract}
We use the Ramanujan's master theorem to evaluate the integral $$\int_{0}^{\infty}\frac{x^{l-1}}{(1+x)^{m+1}}\log^{n}(1+x)\, dx $$ in terms of the digamma function, the gamma function, and the Hurwitz zeta function. 
\end{abstract}
\maketitle
\section{Main Result}
The book \cite{Mellin} has no entry for the Mellin transform of the function 
$$
\frac{\log^{n}(1+x)}{(1+x)^{m+1}}.
$$
We note in the following result that the Mellin transform of the above function can be evaluated when $n$ is a non-negative integer using a series expansion.
\begin{theorem}
\label{main}
For all non-negative integers $n$ we have
\begin{align*}
\int_{0}^{\infty}\frac{x^{l-1}}{(1+x)^{m+1}}\log^{n}(1+x)\, dx
=(-1)^{n}P_{n}(\psi(m+1-l)-\psi(m+1), \zeta(2,m+1)-\zeta(2,m+1-l),
\\ \zeta(3,m+1)-\zeta(3,m+1-l), \cdots, \zeta(n,m+1)-\zeta(n,m+1-l))\cdot 
\frac{\Gamma(m+1-l)\cdot \Gamma(l)}{\Gamma(m+1)}
\end{align*}
where $m$ and $l$ are complex numbers for which the integral on the left side converges, $\Gamma(s)$ is the gamma function, $\psi(s)=\frac{\Gamma'(s)}{\Gamma(s)}$ is the digamma function, $\zeta(s,n)$ is the Hurwitz zeta function, and the polynomial $P_{n}(s_{1},\cdots,s_{n})$ is defined by $P_{0}=1$ and
$$
P_{n}(s_{1},\cdots,s_{n})=(-1)^{n}Y_{n}(-s_{1},-s_{2},-2s_{3},\cdots,-(n-1)!s_{n})
$$
where $Y_{n}$ is the familiar Bell polynomial. The first few values of the polynomials being:
$$ P_{1}(s_{1})=s_{1},$$
$$ P_{2}(s_{1},s_{2})=s_{1}^{2}-s_{2}, $$
$$ P_{3}(s_{1},s_{2},s_{3})=s_{1}^{3}-3s_{1}s_{2}+2s_{3}, $$
$$ P_{4}(s_{1},s_{2},s_{3},s_{4})=s_{1}^{4}-6s_{1}^{2}s_{2}+8s_{1}s_{3}+3s_{2}^{2}-6s_{4}. $$
\end{theorem}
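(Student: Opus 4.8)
The plan is to expand the integrand in a power series about $x=0$ and feed it into Ramanujan's master theorem. The starting point is the identity
\[
\frac{\log^{n}(1+x)}{(1+x)^{m+1}}=(-1)^{n}\,\frac{\partial^{n}}{\partial s^{n}}(1+x)^{-s}\Big|_{s=m+1},
\]
combined with the binomial series $(1+x)^{-s}=\sum_{k\ge 0}\frac{\Gamma(s+k)}{\Gamma(s)\,k!}(-x)^{k}$, valid for $|x|<1$. Differentiating term by term in $s$ (justified by local uniform convergence on compact subsets of the disc) gives
\[
\frac{\log^{n}(1+x)}{(1+x)^{m+1}}=\sum_{k\ge 0}\frac{\phi(k)}{k!}(-x)^{k},\qquad
\phi(w):=(-1)^{n}\,\frac{\partial^{n}}{\partial s^{n}}\frac{\Gamma(s+w)}{\Gamma(s)}\Big|_{s=m+1},
\]
where $\phi$ is the natural holomorphic interpolation of the Taylor coefficients.

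Next I would invoke Ramanujan's master theorem with Mellin parameter $l$ to obtain, on the set of $(m,l)$ for which the left-hand integral converges,
\[
\int_{0}^{\infty}\frac{x^{l-1}}{(1+x)^{m+1}}\log^{n}(1+x)\,dx=\Gamma(l)\,\phi(-l)=(-1)^{n}\,\Gamma(l)\,\frac{\partial^{n}}{\partial s^{n}}\frac{\Gamma(s-l)}{\Gamma(s)}\Big|_{s=m+1}.
\]
It then remains to compute the $n$-th derivative at $s=m+1$ of $g(s):=\Gamma(s-l)/\Gamma(s)$. Writing $g=e^{h}$ with $h(s)=\log\Gamma(s-l)-\log\Gamma(s)$, the Fa\`{a} di Bruno formula for the exponential gives $g^{(n)}(s)=g(s)\,Y_{n}\bigl(h'(s),h''(s),\dots,h^{(n)}(s)\bigr)$ with $Y_{n}$ the complete Bell polynomial. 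Here $h'(s)=\psi(s-l)-\psi(s)$ and $h^{(k)}(s)=\psi^{(k-1)}(s-l)-\psi^{(k-1)}(s)$ for $k\ge 2$; invoking $\psi^{(j)}(s)=(-1)^{j+1}j!\,\zeta(j+1,s)$ and evaluating at $s=m+1$ yields $h'(m+1)=\psi(m+1-l)-\psi(m+1)=:s_{1}$ and $h^{(k)}(m+1)=(-1)^{k-1}(k-1)!\bigl(\zeta(k,m+1)-\zeta(k,m+1-l)\bigr)=(-1)^{k-1}(k-1)!\,s_{k}$ for $k\ge 2$, where $s_{1},\dots,s_{n}$ are exactly the quantities passed to $P_{n}$ in the statement. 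Hence
\[
g^{(n)}(m+1)=\frac{\Gamma(m+1-l)}{\Gamma(m+1)}\;Y_{n}\bigl(s_{1},-s_{2},2s_{3},-6s_{4},\dots,(-1)^{n-1}(n-1)!\,s_{n}\bigr).
\]

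The last step is to identify this Bell polynomial with $P_{n}$. Using the homogeneity relation $Y_{n}(a b_{1},a^{2}b_{2},\dots,a^{n}b_{n})=a^{n}Y_{n}(b_{1},\dots,b_{n})$ with $a=-1$ and $b_{k}=(-1)^{k-1}(k-1)!\,s_{k}$, one gets $Y_{n}\bigl(s_{1},-s_{2},2s_{3},\dots,(-1)^{n-1}(n-1)!\,s_{n}\bigr)=(-1)^{n}Y_{n}\bigl(-s_{1},-s_{2},-2s_{3},\dots,-(n-1)!\,s_{n}\bigr)=P_{n}(s_{1},\dots,s_{n})$ by the definition of $P_{n}$, and substituting back reproduces precisely the asserted closed form (with $n=0$ handled by $Y_{0}=P_{0}=1$, recovering the beta integral); the displayed values $P_{1},\dots,P_{4}$ then follow by expanding $Y_{1},\dots,Y_{4}$. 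I expect the only genuine subtlety to be the justification of Ramanujan's master theorem in this setting — checking the growth and analyticity of $\phi$ in a vertical strip and the legitimacy of differentiating the series under the integral — after which the argument is routine manipulation of Bell polynomials, the polygamma--Hurwitz zeta reduction, and sign bookkeeping. As an independent check, the substitution $u=(1+x)^{-1}$ rewrites the integral as $(-1)^{n}\,\partial_{a}^{n}\!\left.B(a,l)\right|_{a=m+1-l}$ with $B$ the Euler beta function, which recovers the same formula without appeal to Ramanujan's master theorem.
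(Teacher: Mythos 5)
Your proof is correct, and while it shares the paper's overall strategy --- expand the integrand as a power series in $x$ and feed the coefficients into Ramanujan's master theorem --- the key input is genuinely different. The paper imports Zave's series expansion, whose coefficients are $(-1)^nP_n$ evaluated at differences of generalized harmonic numbers times $\binom{m+k}{m}$, and continues those coefficients to $k=-l$ via $H^{(1)}_n=\psi(n+1)+\gamma$ and $H^{(s)}_n=\zeta(s)-\zeta(s,n+1)$. You instead generate the series yourself by writing the integrand as $(-1)^n\partial_s^n(1+x)^{-s}\big|_{s=m+1}$ and differentiating the binomial series, so the Bell-polynomial structure emerges only at the end, from Fa\`a di Bruno applied to $\Gamma(s-l)/\Gamma(s)$, rather than being quoted from Zave. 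Your bookkeeping is right: $h^{(k)}(m+1)=(-1)^{k-1}(k-1)!\,s_k$ via $\psi^{(j)}(s)=(-1)^{j+1}j!\,\zeta(j+1,s)$, and the homogeneity $Y_n(ab_1,\dots,a^nb_n)=a^nY_n(b_1,\dots,b_n)$ with $a=-1$, $b_k=-(k-1)!\,s_k$ lands exactly on the paper's definition of $P_n$. What your route buys is self-containedness (no appeal to Zave or to harmonic-number identities), and your closing remark is worth emphasizing: the substitution $u=(1+x)^{-1}$ reduces the integral to $(-1)^n\partial_a^nB(a,l)\big|_{a=m+1-l}$, which is a complete elementary proof that bypasses Ramanujan's master theorem altogether and therefore also disposes of the one point both arguments otherwise leave unverified, namely Hardy's analyticity and growth hypotheses on $\phi$ --- a hypothesis check the paper's proof does not address either.
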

\begin{proof}
Zave \cite{Zave} proved the following series expansion:
\begin{equation}
\frac{\log^{n}(1+x)}{(1+x)^{m+1}}=(-1)^{n}\sum_{k=0}^{\infty}P_{n}(H_{m+k}^{(1)}-H_{m}^{(1)},H_{m+k}^{(2)}-H_{m}^{(2)},\cdots, H_{m+k}^{(n)}-H_{m}^{(n)})\binom{m+k}{m}(-x)^{k}
\end{equation}
where
$$
H_{n}^{(k)}=1+\frac{1}{2^{k}}+\frac{1}{3^k}+\cdots+\frac{1}{n^k}.
$$
We first recall that
$$ H_{n}^{(1)}=\psi(n+1)+\gamma $$
$\gamma$ being the Euler's constant, and
$$
H_{n}^{(s)}=\zeta(s)-\zeta(s,n+1),
$$
for $s\geq 2$.\par 
Now recalling the Ramanujan's master theorem \cite{Hardy} which states that
\begin{equation*}
    \int_{0}^{\infty} x^{l-1}\{ \phi(0)-x \phi(1)+x^{2} \phi(2)-\cdots\}\, dx=\frac{\pi}{\sin{l \pi}}\phi(-l)
\end{equation*}
gives us our result whenever $m$ and $l$ are chosen such that the integral is convergent.
\end{proof}
\section{Examples}
\begin{example}
Substituting $n=1$ in our result Theorem \ref{main} gives us:
$$
\int_{0}^{\infty}\frac{x^{l-1}}{(1+x)^{m+1}}\log(1+x)\, dx
=(\psi(m+1)-\psi(m+1-l))\cdot \frac{\Gamma(m+1-l)\cdot \Gamma(l)}{\Gamma(m+1)}.
$$
\end{example}
\begin{example}
By letting $m\rightarrow 0$ and $l\rightarrow 0$ in Theorem 1 we have
$$
\int_{0}^{\infty}\frac{x^{n}}{e^{x}-1}\,dx= \int_{0}^{\infty}\frac{\log^{n}(1+t)}{t(1+t)}\, dt=n!\, \zeta(n+1).
$$
\end{example}
\begin{example}
By letting $l\rightarrow 1/2$, $m\rightarrow 0$, and $n=2$ we get
$$
\int_{0}^{\infty}\frac{\log^{2}(1+x)}{\sqrt{x}\, (1+x)}\, dx=\frac{\pi^3}{3}+4\,\pi\, \log^{2}{2}.
$$
Mathematica is unable to arrive at this form. 
\end{example}
\begin{example}
By letting $l\rightarrow 3/4$, $m\rightarrow 0$, and $n=2$ we get
$$
\int_{0}^{\infty}\frac{\log^{2}(1+x)}{\sqrt[4]{x}\, (1+x)}\, dx=\sqrt{2}\, \pi \left(8\, C+\frac{5\,\pi^2}{6}+\left(\gamma+\psi\left(\frac{1}{4}\right)\right)^2\right),
$$
where $C=\sum_{n=0}^{\infty}\frac{(-1)^{n}}{(2n+1)^{2}}$ is the Catalan's constant, and $\gamma$ is the Euler's constant. Mathematica is unable to arrive at this form.
\end{example}


\begin{thebibliography}{9}
\bibitem{Mellin}
Brychkov, Y. A., Marichev, O. I., \& Savischenko, N. A. (2019). Handbook of Mellin Transforms, CRC Press, Boca Raton, FL.

\bibitem{Zave}
Zave, D. A. (1976). A series expansion involving the harmonic numbers. Information Processing Letters, 5(3), 75-77.

\bibitem{Hardy}
Hardy, G. H. (1978). \emph{Ramanujan. Twelve Lectures on subjects suggested by his life and work (3rd ed.)}, Chelsea.

\end{thebibliography}
\end{document}